\newtheorem{thm}{Theorem}[section]
\newtheorem{prop}[thm]{Proposition}
\newtheorem{rmk}[thm]{Remark} 
\newtheorem{cor}[thm]{Corollary}
\newcommand{\R}{{\mathbb R}}
\newcommand{\C}{{\mathbb C}}
\newcommand{\Z}{{\mathbb Z}}
\newcommand{\N}{{\mathbb N}}
\newcommand{\Sp}{\textrm{Sp}}
\newcommand{\lan}{\langle}
\newcommand{\ran}{\rangle}
\newcommand{\mrm}[1]{\mathrm{#1}}
\newcommand{\mcal}[1]{\mathcal{#1}}
\renewcommand{\mod}{{\, \rm mod \, }}
\newcommand{\psmat}[4]{\bigl( \begin{smallmatrix} #1 & #2 
\\ #3 & #4 \end{smallmatrix} \bigr)}
\title[Fourier-Jacobi coefficients]{A Note on Fourier-Jacobi 
coefficients of Siegel modular forms}
\author{Sanoli Gun}
\address{Sanoli Gun$\phantom{mmmmmmmmmm
mmmmmmmmmmmmmmmmmmmmmmmmmmmmmmmmmmm}$\\
Institute of Mathematical Sciences\\
C.I.T Campus, Taramani \\
Chennai 600113, 
India.}
\email{sanoli@imsc.res.in}
\author{Narasimha Kumar}
\address{Narasimha Kumar $\phantom{mmmmmmmmmm
mmmmmmmmmmmmmmmmmmmmmmmmmmmmmmmmmmm}$\\
Indian Institute of Technology Hyderabad \\
Ordnance Factory Estate \\
Yeddumailaram 502205 \\
India.}
\email{narasimha.kumar@iith.ac.in}
\begin{document}

\begin{abstract}
Let $F$ be a Siegel cusp form of weight $k$ and genus $n>1$ with 
Fourier-Jacobi coefficients $f_m$. In this article, we estimate 
the growth of the Petersson norms 
of $f_m$, where $m$ runs over an arithmetic progression.
This result sharpens a recent result of Kohnen in~\cite{WK1}. 
\end{abstract}

\subjclass[2010]{Primary 11F46,11F50; Secondary 11F30}

\keywords{Siegel cusp forms, 
Fourier-Jacobi coefficients, Petersson norms}

\maketitle

\section{Introduction}

\smallskip

Let $\mcal{H}_n$ be the Siegel upper half-plane 
of genus $n \ge 1$ and $\Gamma_n : = \Sp_n(\Z)$ be the full 
Siegel modular group. Also let $S_k(\Gamma_n)$ be the space of 
Siegel cusp forms of weight $k$ on $\Gamma_n$.

For $Z \in \mcal{H}_n$, write $Z= \psmat{\tau}{z^t}{z}{\tau^{\prime}}$,
where $\tau \in \mcal{H}_{n-1}$, $z \in \C^{n-1}$
and $\tau^{\prime} \in \mcal{H}_1$. If $F \in S_k(\Gamma_n)$ 
with $n>1$, the Fourier-Jacobi
expansion of $F$ relative to the maximal parabolic 
group of type $(n-1, 1)$ is of the form
$$ 
F(Z) = \sum_{m \geq 1} f_m(\tau,z) e^{2\pi i m \tau^{\prime}}. 
$$
The functions  $f_m$ belong to the space $J_{k,m}^{\mrm{cusp}}$ 
of Jacobi cusp forms 
of weight $k$, index $m$
and of genus $n-1$, i.e., invariant under the Jacobi group 
$\Gamma_{n-1}^J := \Gamma_{n-1} \ltimes \Z^{n-1} \times \Z^{n-1}$. 
For $f,g \in J_{k,m}^{\mrm{cusp}}$, the inner product of $f$ and 
$g$ is defined by
$$ 
\langle f,g \rangle = \int_{\Gamma_{n-1}^J \backslash \mcal{H}^{n-1} 
\times \C^{n-1}} f(\tau,z) \overline{g(\tau,z)}(\mrm{det}v)^{k-n-1} 
e^{-4 \pi mv^{-1}[y^{t}]} dudvdxdy,
$$
where $\tau=u+iv, z=x+iy$.

Let $a, q \geq 2$ be natural numbers with $(a,q)=1$.  
In~\cite[Thm. 1] {BBK}, B\"ocherer, Bruinier and Kohnen showed that 
for any non-zero function $F$ in $S_k(\Gamma_n) (n>1)$ with 
Fourier-Jacobi coefficients $f_m$, 
there exist infinitely many $m \in \N$  with $m \equiv a \pmod q$ such that 
$\lan f_m,f_m \ran \not = 0$. In this article, 
we prove the existence of infinitely many $m \in \N$ with 
$m \equiv a \pmod q$ such that $\lan f_m,f_m \ran> c_{F,q} m^{k-1}$ 
(see Theorem~\ref{main-thm}). This also improves a recent result of 
Kohnen \cite{WK1} about existence of infinitely many $m \ge 1$ 
such that $\lan f_m, f_m \ran > c_F m^{k-1}$.
In order to prove our result, we combine the techniques of
\cite{BBK} and \cite{WK1}.

\smallskip

\section{Preliminaries}

\smallskip

Let $F$ be a non-zero cusp form in $S_k(\Gamma_n) (n>1)$ with 
Fourier-Jacobi coefficients $\{ f_m\}_{m \in \N}$. 
By the works of Kohnen and Skoruppa \cite{KS} 
and of Krieg \cite{AK}, we know that 
$\lan f_m, f_m \ran \ll_F m^k$ (the constant in $\ll$ depends only on $F$). 
Hence for natural numbers $a, q$ with $(a, q)=1$, the Dirichlet series 
\begin{equation*}
D(s ; a, q, F) : = \underset{\underset{m \equiv a \mod q}
{m \ge 1}}{\sum} \frac{\lan f_m, f_m \ran}{m^s}
\end{equation*}
converges for $s \in \C$ with $\Re(s) > k+1$.

\begin{prop}\label{Main-Prop}
Let $a, q >1$ be natural numbers with $(a,q) =1$ and 
$F \in S_k(\Gamma_n)$, where $n > 1$. Then the
Dirichlet series $D(s;a,q, F)$ converges for
$\Re(s)> k$ and has a simple pole at $s=k$. 
Moreover, it vanishes at $s = 0, -1, -2, \cdots$. 
\end{prop}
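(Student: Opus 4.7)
The plan is to combine the character-orthogonality trick of \cite{BBK} with the Rankin-Selberg argument of \cite{WK1}. Since $(a,q) = 1$, orthogonality of Dirichlet characters modulo $q$ gives
$$
D(s;a,q,F) \;=\; \frac{1}{\varphi(q)} \sum_{\chi \bmod q} \bar\chi(a)\, D_\chi(s,F),
\qquad
D_\chi(s,F) := \sum_{m \ge 1} \frac{\chi(m) \lan f_m, f_m \ran}{m^s},
$$
which is valid for every $m \ge 1$ because both sides vanish when $\gcd(m,q) > 1$. The proposition then reduces to the analytic behaviour of each twisted series $D_\chi(s,F)$.

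For each $\chi$ I would set up a Rankin-Selberg integral representation along the lines of \cite{WK1}: pair $|F(Z)|^2 (\mrm{det}\,Y)^{k-n-1}$ against a real-analytic Eisenstein series $E_\chi(\tau',s)$ of level $q$ twisted by $\chi$, depending only on the last variable $\tau'$, and integrate over a suitable fundamental domain. The Fourier-Jacobi expansion of $F$ together with integration over $u' = \mathrm{Re}(\tau') \in [0,1]$ collapses the double sum to its diagonal $\sum_m |f_m(\tau,z)|^2 e^{-4\pi m v'}$; after unfolding against $E_\chi$ and integrating over the Jacobi fundamental domain in $(\tau,z)$ and over $v'$, one expects an identity of the shape
$$
\Gamma(s)\,(4\pi)^{-s}\, D_\chi(s,F) \;=\; I_\chi(s),
$$
with $I_\chi(s)$ inheriting the analytic properties of $E_\chi$.

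Standard facts about $E_\chi$ (entire for non-principal $\chi$; simple pole at the point corresponding to $s = k$ for the principal character, with nonzero residue) then translate directly: $D_\chi(s,F)$ is entire for non-trivial $\chi$, and has a simple pole at $s = k$ for principal $\chi$ whose residue is a positive multiple of $\lan F,F \ran$. Summing over $\chi$ produces the claimed simple pole of $D(s;a,q,F)$ at $s = k$ with positive residue. Convergence in $\Re(s) > k$ then follows from Landau's theorem on Dirichlet series with non-negative coefficients, once we know that $s=k$ is the rightmost singularity. The trivial zeros at $s = 0, -1, -2, \ldots$ come from the zeros of $1/\Gamma(s)$ in $D_\chi(s,F) = (4\pi)^s I_\chi(s)/\Gamma(s)$ and survive summation over $\chi$.

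The main technical obstacle is to execute the Rankin-Selberg unfolding cleanly against the factor $(\mrm{det}\,Y)^{k-n-1}$. Using the Schur decomposition $\mrm{det}\,Y = \mrm{det}\,v \cdot (v' - y v^{-1} y^t)$, one has to interchange the $v'$-Mellin transform with the integration over the Jacobi variable $y$ so that the exponential $e^{-4\pi m v^{-1}[y^t]}$ of the Jacobi Petersson measure emerges and the surviving $v'$-integral yields exactly $\Gamma(s)$. Pinning down the normalization of $E_\chi$ so that this gamma factor (hence the trivial zeros) comes out correctly, and verifying that the residue at $s = k$ is indeed a nonzero multiple of $\lan F, F \ran$, is where the bulk of the work lies.
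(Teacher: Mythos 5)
Your overall skeleton --- decompose $D(s;a,q,F)$ by orthogonality of characters modulo $q$ and reduce everything to the analytic properties of the twisted series $D(s,\chi,F)$ --- is exactly the paper's strategy. The difference is that the paper does not redo the Rankin--Selberg analysis: it simply quotes Kohnen--Krieg--Sengupta \cite{KKS} and \cite{WK1}, namely that the completed series
$$
D^*(s,\chi,F)=\left(\tfrac{2\pi}{q}\right)^{-2s}\Gamma(s)\,\Gamma(s-k+n)\,L(2s-2k+2n,\chi^2)\,D(s,\chi,F)
$$
is entire for $\chi\ne\chi_0$ and meromorphic with a simple real pole at $s=k$ for $\chi=\chi_0$. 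Re-deriving this by unfolding is legitimate in principle, but your sketch defers the hard part (which you yourself flag), and --- more importantly --- the shape you posit for the completed series, $\Gamma(s)(4\pi)^{-s}D_\chi(s,F)=I_\chi(s)$, is wrong in a way that is not cosmetic.

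The actual completion carries \emph{two} gamma factors, $\Gamma(s)\Gamma(s-k+n)$, together with the factor $L(2s-2k+2n,\chi^2)$ coming from the normalization of the Eisenstein series. This matters precisely for the trivial zeros, which are the point of the proposition's last assertion: at $s=-j$ ($j\ge 0$) the factor $L(2s-2k+2n,\chi^2)$ is evaluated at the non-positive even integer $2(n-k-j)$ and typically has a simple real zero there (e.g.\ whenever $\chi^2\ne\chi_0$, since $\chi^2$ is even), so a single pole of $\Gamma(s)$ would be cancelled by that zero and you could not conclude that $D(s,\chi,F)$ vanishes. The vanishing survives only because \emph{both} $\Gamma(s)$ and $\Gamma(s-k+n)$ have simple poles at these points (using $k>n$), so the product $\Gamma(s)\Gamma(s-k+n)L(2s-2k+2n,\chi^2)$ still has a pole of order at least one; this pole--zero bookkeeping is exactly the content of the paper's proof and is absent from yours. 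Two smaller points: for $q>1$ the residue of $D(s,\chi_0,F)$ at $s=k$ is the Petersson product of $F$ against a trace of the $\chi_0$-twist of $F$, not simply a positive multiple of $\lan F,F\ran$ (the paper gets positivity of the residue instead from non-negativity of the coefficients, cf.\ Remark~\ref{imp}); and your Landau-theorem argument for convergence on $\Re(s)>k$ is fine and matches the paper's implicit reasoning.
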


\begin{proof}
Let $\chi$ be a Dirichlet character modulo $q$. Then the
Dirichlet series 
$$
D(s, \chi, F) := \sum_{m \ge 1} \frac{\chi(m)\lan f_m, f_m \ran}{m^s}
$$
converges for $s \in \C$ with $\Re(s)\gg 0$.
Let $\chi_0$ be the principal Dirichlet character modulo $q$.
For $\chi \not = \chi_0$, we know that 
the completed Dirichlet series
\begin{equation*}
D^*(s, \chi , F) := \left(\frac{2\pi}{q}\right)^{-2s} \Gamma(s) 
\Gamma(s-k+n) L(2s -2k + 2n, \chi^2)~D(s, \chi, F)
\end{equation*}
extends to a holomorphic function on $\C$ 
(see \cite{KKS} and \cite{WK1}
for details). But when $\chi = \chi_0$, 
the completed Dirichlet
series $D^*(s, \chi_0, F)$ has a meromorphic continuation 
to $\C$ with a simple real pole at $s=k$ 
(see~\cite[page 495]{KKS} and 
the remark in page $7$ of~\cite{BBK}). 

We know  if $\chi^2 \ne \chi_0$, then the real 
zeros of $L(s, \chi^2)$ are 
at $s = 0, -2, -4, \cdots$ since $\chi^2$ is an even character.
Also the poles of $\Gamma(s)$ are at  $s = 0 ,-1, -2, \cdots$.
Further, all these zeros and poles are simple.   
Hence $D(s, \chi, F)$ for $\chi \ne \chi_0$ extends to a 
holomorphic function on $\C$ 
and vanishes at $s =0 , -1 , -2, \cdots$.
 
If $\chi = \chi_0$, then $D(s, \chi_0, F)$ has a meromorphic 
continuation to $\C$ possibly with a simple pole at $s=k$. 
Indeed, the function $D(s, \chi_0, F)$ has a simple real 
pole at $s=k$, since $D^*(s, \chi_0, F)$
has a simple real pole at $s=k$ and none of the functions
$L(2s -2k + 2n, \chi^2)$, $\Gamma(s-k+n)$ and $\Gamma(s)$ have
a zero or a pole at $s=k$ and they are holomorphic there. 
Furthermore, the series $D(s, \chi_0, F)$ 
vanishes at $s = 0, -1, -2, \cdots$. 

Hence using orthogonality of characters, we get

\begin{eqnarray}\label{one}
D(s; a,q, F) &=&
\frac{1}{\varphi(q)} \sum_{m \ge 1} \sum_{\chi \mod q} 
\chi(a^{-1}m) \lan f_m, f_m \ran m^{-s} \nonumber \\
\label{test} &=& \frac{1}{\varphi(q)} \sum_{\chi \mod q} 
\chi(a^{-1})~ D(s, \chi, F) 
\phantom{m} \text{for } \Re(s) > k.
\end{eqnarray}
This implies that the Dirichlet series $D(s ; a,q, F)$ has a 
meromorphic continuation to $\C$
with a simple real pole at $s=k$ and vanishes at 
$s = 0, -1, -2, \cdots$. 
\end{proof}

\begin{rmk}\label{lem-1}
{\rm It is clear from equation (\ref{one}) that 
the residue of $D(s;a,q,F)$ at $s=k$ 
depends only on $q$ and the residue of $D(s,\chi_0,F)$ at $s=k$,
but not on~$a$. In fact, the residue of $D(s,\chi_0,F)$ can be expressed 
in terms of the Petersson scalar product of $F$ with the 
Trace of ``$\chi_0$-twist of $F$'' (see \cite[Thm. 1]{KKS} 
for further details).}    
\end{rmk}

\begin{rmk}\label{imp}
{\rm 
The residue of $D(s;a,q,F)$ at $s=k$ is real and positive. This 
is true as $D(s;a,q,F)$ is holomorphic on $\Re(s) > k$ with
a simple pole at $s=k$ and is positive 
on the real half axis $\Re(s) > k$.}
\end{rmk}

We end this section by recalling a recent result of Pribitkin 
on Dirichlet series with oscillating coefficients.

\smallskip
\noindent
{\bf Definition.}
We call a sequence $\{ a_n \}_{n=1}^{\infty}$ with $a_n\in\R$ 
oscillatory if there exist infinitely many $n$ such that $a_n >0$ 
and infinitely many $n$ such that $a_n<0$.

\begin{thm}[Pribitkin \cite{WP},\cite{WP1}] \label{Pribitkin-result}
Let $a_n$ be a sequence of real numbers such that 
the associated Dirichlet series
$$
F(s) =\sum_{n=1}^{\infty} \frac{a_n}{n^s}
$$ 
be non-trivial and it converges on some half-plane.  
If $F(s)$ is holomorphic on the whole real line and has 
infinitely many real zeros, then the sequence 
$\{a_n\}_{n=1}^{\infty}$ is oscillatory.
\end{thm}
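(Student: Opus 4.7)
The plan is to prove the contrapositive: assuming $\{a_n\}$ is not oscillatory, I will deduce that either $F$ fails to be holomorphic somewhere on the real line or $F$ has only finitely many real zeros. After replacing $F$ by $-F$ if necessary, non-oscillatory means there exists $N$ such that $a_n \ge 0$ for all $n \ge N$. I split the series as $F(s) = P(s) + G(s)$, where
\[
P(s) = \sum_{n=1}^{N-1} \frac{a_n}{n^s}, \qquad G(s) = \sum_{n \ge N} \frac{a_n}{n^s}.
\]
Here $P$ is an entire Dirichlet polynomial while $G$ has non-negative coefficients.

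The first key input is Landau's theorem on Dirichlet series with non-negative coefficients: the abscissa of convergence $\sigma_c(G)$ is either $-\infty$ or a genuine singularity of the analytic continuation on the real line. Since $G = F - P$ is holomorphic on all of $\R$ by hypothesis, no finite $\sigma_c(G)$ can be a singularity, so $\sigma_c(G) = -\infty$. Consequently the series for $G$ converges for every $s \in \C$ and defines an entire function.

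Next I would control the behavior of $F$ along the real axis at both infinities. Let $n_0$ be the smallest index with $a_{n_0} \ne 0$; then $n_0^\sigma F(\sigma) \to a_{n_0} \ne 0$ as $\sigma \to +\infty$, so any sequence of real zeros tending to $+\infty$ would force $a_{n_0}=0$, contradicting the non-triviality of $F$. For the behavior as $\sigma \to -\infty$, I would split into two subcases. If $G$ has only finitely many non-zero terms then $F$ is itself a Dirichlet polynomial, i.e.\ an exponential sum $\sum_k c_k e^{-\sigma \log n_k}$ with distinct real exponents, which has only finitely many real zeros by the classical Descartes-type bound. Otherwise pick $n^* \ge N$ with $a_{n^*} > 0$ and $n^* > M := \max\{n < N : a_n \ne 0\}$; then $G(\sigma) \ge a_{n^*} (n^*)^{-\sigma}$ dominates $|P(\sigma)| = O(M^{-\sigma})$ as $\sigma \to -\infty$, so $F(\sigma) > 0$ for all sufficiently negative $\sigma$. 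Combined with the non-accumulation of zeros at finite points (a consequence of holomorphy plus non-triviality), $F$ has only finitely many real zeros in total, contradicting the hypothesis of infinitely many.

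The crux of the argument, and the only non-routine step, is securing the entire extension of $G$ and its domination over the Dirichlet polynomial $P$ on the far negative real axis; this is precisely what Landau's theorem unlocks via the non-oscillation assumption. The behavior at $+\infty$ and the exponential-polynomial bound at $-\infty$ in the finitely-supported case are both standard, so once Landau is invoked the remaining estimates are essentially bookkeeping.
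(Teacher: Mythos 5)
Your argument is correct. The paper itself gives no proof of this statement --- it is quoted directly from Pribitkin's papers --- and your route (reduce to eventually non-negative coefficients, invoke Landau's theorem to push the abscissa of convergence of the tail to $-\infty$, then rule out zeros near $\sigma=+\infty$ via the leading coefficient and near $\sigma=-\infty$ via positivity of the tail dominating the initial Dirichlet polynomial, with the Descartes-type bound handling the finitely-supported case) is essentially Pribitkin's own Landau-based argument, so there is nothing genuinely different to compare.
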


\smallskip

\section{Statement and proof of the Main Result}

\smallskip

Let $a, q > 1$ be natural numbers with $(a,q)=1$. Also let 
$c_{F,q}$ be the residue of the function $D(s;a,q,F)$ at $s=k$.
Recall that $c_{F,q}$ is independent of $a$ by Remark~\ref{lem-1}.
Moreover, $c_{F,q}$ is real and positive by Remark \ref{imp}.

\begin{thm}\label{main-thm}
Let $a, q > 1$ be natural numbers with $(a,q)=1$ and
$F$ be a non-zero Siegel cusp form in $S_k(\Gamma_n)$, $n>1$ 
with Fourier-Jacobi coefficients
$\{f_m\}_{m \in \N}$. Then there exist 
infinitely many  $m$ with $m \equiv a \mod q$ 
such that $\lan f_m, f_m\ran >  c_{F,q} m^{k-1}$. 
\end{thm}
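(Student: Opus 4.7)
The plan is to introduce the auxiliary Dirichlet series
\[
G(s) := D(s;a,q,F) - c_{F,q}\,\zeta(s-k+1),
\]
whose coefficients are real. For $m \equiv a \pmod{q}$ the $m$-th coefficient equals $\langle f_m, f_m\rangle - c_{F,q}\,m^{k-1}$, while for $m \not\equiv a \pmod{q}$ it equals $-c_{F,q}\,m^{k-1}$. Since $c_{F,q}>0$ by Remark~\ref{imp}, the coefficients of the second type are \emph{strictly negative}. The strategy is then to apply Pribitkin's theorem to $G$: any positive coefficient it forces to exist must necessarily come from an index $m \equiv a \pmod{q}$ with $\langle f_m, f_m\rangle > c_{F,q}\,m^{k-1}$.

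To invoke Theorem~\ref{Pribitkin-result}, I would verify its hypotheses in turn. First, the series converges absolutely for $\Re(s)>k$, and it is non-trivial because its coefficients at indices coprime-incompatible with $a \pmod q$ are nonzero. Second, by Proposition~\ref{Main-Prop} the function $D(s;a,q,F)$ is meromorphic on $\C$ with a single simple pole at $s=k$ of residue $c_{F,q}$, and $c_{F,q}\,\zeta(s-k+1)$ has exactly the same polar behaviour at $s=k$; therefore $G(s)$ is entire, in particular holomorphic on the whole real line.

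The last hypothesis --- infinitely many real zeros --- is the step I expect to require the most care, although it is still a short computation. By Proposition~\ref{Main-Prop}, $D(-n;a,q,F)=0$ for every integer $n\ge 0$, so
\[
G(-n) = -c_{F,q}\,\zeta(1-k-n).
\]
This vanishes precisely when $1-k-n$ coincides with one of the trivial zeros $-2,-4,-6,\ldots$ of $\zeta$, i.e.\ whenever $n+k-1$ is a positive even integer. Checking parity separately for $k$ even and $k$ odd shows that this occurs for infinitely many $n\ge 0$, producing infinitely many real zeros of $G$.

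With the four hypotheses verified, Theorem~\ref{Pribitkin-result} gives that the coefficient sequence of $G$ is oscillatory, and in particular contains infinitely many strictly positive terms. Combining this with the sign observation in the first paragraph yields infinitely many $m \equiv a \pmod{q}$ with $\langle f_m, f_m\rangle > c_{F,q}\,m^{k-1}$, which is the assertion of Theorem~\ref{main-thm}. The essential new input over Kohnen's argument in \cite{WK1} is the use of the twisted series $D(s;a,q,F)$ together with the sign trick on the residue class $m \not\equiv a \pmod q$, reflecting the announced combination of techniques from \cite{BBK} and \cite{WK1}.
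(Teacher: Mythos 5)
Your proposal is correct and follows essentially the same route as the paper: the same auxiliary series $\overline{D}(s;a,q,F)=D(s;a,q,F)-c_{F,q}\,\zeta(s-k+1)$, the same observation that all coefficients at $m\not\equiv a\pmod q$ are strictly negative, and the same application of Pribitkin's theorem using the real zeros at $s=k-1-2t$ coming from the trivial zeros of $\zeta$ together with the vanishing of $D(s;a,q,F)$ at nonpositive integers. Your verification of the hypotheses is just a more explicit write-up of what the paper packages into Proposition~\ref{Main-Prop} and Remark~\ref{imp}.
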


\begin{proof}
Consider the Dirichlet series
\begin{equation}
\label{key-equation}
\overline{D}(s; a,q, F) =  D(s ; a, q,F) -  
c_{F,q} \zeta(s-k+1) \phantom{m} \text{for  } \Re(s) > k.
\end{equation}

By Proposition~\ref{Main-Prop}, the series 
$\overline{D}(s; a,q, F)$ has a 
meromorphic continuation to $\C$ with
no poles on the real line and vanishes at $s= k-1 - 2t$, where 
$t \in \N, ~ t > (k-1)/2 $. 

For $m \ge 1$, let 
\begin{eqnarray}
\label{key-expression}
\beta(m) &:=& \left\{ \begin{array}{ll}
                  \lan f_m, f_m\ran - c_{F,q} m^{k-1}
                    & \mbox{if $m \equiv a \!\!\!\pmod{q}$} \\
                   - c_{F,q} m^{k-1}   & \mbox{otherwise}
                  \end{array} \right.  
\end{eqnarray}
be the general coefficient of $\overline{D}(s; a, q,F)$. 
We know that $\overline{D}(s;a,q,F)$ cannot be 
identically zero as $c_{F,q} > 0$ by Remark \ref{imp}.
Then by using Theorem~\ref{Pribitkin-result}, there exist 
infinitely many $m$
with $m \equiv a \pmod{q}$ such that 
$\lan f_m, f_m\ran > c_{F,q} m^{k-1}$.
\end{proof}

Using the above method, we are unable to prove that
there exist infinitely many $m$
with $m \equiv a \pmod{q}$ such that 
$\lan f_m, f_m\ran  < c_{F,q} m^{k-1}$.
But we can prove the following weaker
theorem.

\begin{thm}
Let $F$ be a non-zero cusp form in $S_k(\Gamma_n), n>1$ with 
Fourier-Jacobi coefficients $\{f_m\}_{m \in \N}$. 
Let $q$ be a natural number and also let $c_{F,q}$ be the 
residue of $D(s;a,q,F)$ for some $a \in \N$ with $(a,q)=1$
{\rm(}hence for all $a${\rm)}. Then 
there exist natural numbers $b,c$ with $(bc,q)=1$ 
such that the following hold:
\begin{itemize}
\item 
there exist infinitely many $m \in \N$ with $m \equiv b \pmod q$
such that $\lan f_m, f_m \ran >  q c_{F,q}m^{k-1}$ and 
\item 
there exist infinitely many $m \in \N$ with $m \equiv c \pmod q$
such that $\lan f_m, f_m \ran < qc_{F,q} m^{k-1}$.
\end{itemize}
\end{thm}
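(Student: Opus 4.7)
The plan is to run Pribitkin's oscillation theorem on a single Dirichlet series that sees all reduced residue classes modulo $q$ simultaneously, and then deploy the pigeonhole principle over those $\varphi(q)$ classes. Set
$$
\zeta_q(s) \;:=\; \sum_{(m,q)=1} m^{-s} \;=\; \zeta(s)\prod_{p\mid q}(1-p^{-s}),
$$
which has a simple pole at $s=1$ of residue $\varphi(q)/q$, and define
$$
\tilde{D}(s;q,F) \;:=\; D(s,\chi_0,F) \;-\; q\,c_{F,q}\,\zeta_q(s-k+1).
$$
Its Dirichlet coefficient at $m$ equals $\lan f_m,f_m\ran - q\,c_{F,q}\,m^{k-1}$ when $(m,q)=1$ and vanishes otherwise.

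The first task is to check that $\tilde{D}$ is entire and has infinitely many real zeros. Summing the residues of $D(s;a,q,F)$ over the $\varphi(q)$ classes coprime to $q$, the function $D(s,\chi_0,F)$ has a simple pole at $s=k$ of residue $\varphi(q)\,c_{F,q}$, which exactly matches the residue $q\,c_{F,q}\cdot\varphi(q)/q$ of $q\,c_{F,q}\,\zeta_q(s-k+1)$; since no other poles arise, $\tilde{D}$ is entire. For the real zeros, I would use that $D(s,\chi_0,F)$ vanishes at $s=0,-1,-2,\ldots$ (from the proof of \propref{Main-Prop}) and that $\zeta_q(s-k+1)$ inherits the trivial zeros of $\zeta(s-k+1)$ at $s=k-1-2t$, since the Euler correction $\prod_{p\mid q}(1-p^{2t})$ never vanishes. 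Choosing $t>(k-1)/2$ places $s=k-1-2t$ at a negative integer, where both summands vanish and hence $\tilde{D}(s)=0$.

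The most delicate step is to rule out $\tilde{D}\equiv 0$: if it were to vanish identically, then $\lan f_m,f_m\ran = q\,c_{F,q}\,m^{k-1}$ for every $m$ coprime to $q$, which would force $D(s,\chi_0,F)$ to be a scalar multiple of $\zeta_q(s-k+1)$. However, the completed series $D^*(s,\chi_0,F)$ carries the Rankin--Selberg archimedean factor $\Gamma(s)\Gamma(s-k+n)$ together with the Euler-corrected L-factor $L(2s-2k+2n,\chi_0^2)$, whereas $\zeta_q(s-k+1)$ is completed by a single Gamma factor of the form $\Gamma((s-k+1)/2)$. Since $n>1$, the archimedean local types and the resulting functional equations are incompatible, so such a proportionality cannot occur. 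This comparison of completed L-functions is where I expect the main work to lie.

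Once the hypotheses are in place, \thmref{Pribitkin-result} yields that the coefficient sequence of $\tilde{D}$ is oscillatory, with infinitely many positive and infinitely many negative terms; because the coefficients vanish whenever $(m,q)>1$, every sign change is carried by some $m$ coprime to $q$. Finally, pigeonholing over the $\varphi(q)$ reduced residue classes modulo $q$, some class $b$ with $(b,q)=1$ contains infinitely many $m$ with $\lan f_m,f_m\ran > q\,c_{F,q}\,m^{k-1}$, and some class $c$ with $(c,q)=1$ contains infinitely many $m$ with $\lan f_m,f_m\ran < q\,c_{F,q}\,m^{k-1}$; these choices satisfy $(bc,q)=1$, completing the proof.
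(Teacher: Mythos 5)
Your construction is exactly the paper's: the series you call $\tilde{D}(s;q,F)$ is precisely the paper's $\overline{D}(s;a,q,F)=\sum_{(a,q)=1}D(s;a,q,F)-qc_{F,q}\,M\,\zeta(s-k+1)$ with $M=\prod_{p\mid q}(1-p^{-(s-k+1)})$, and your residue cancellation at $s=k$, the real zeros at $s=k-1-2t$, the appeal to Theorem~\ref{Pribitkin-result}, and the final pigeonhole over the $\varphi(q)$ reduced classes all match the paper's argument step for step.

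The one place you diverge is also the one genuine gap: ruling out $\tilde{D}\equiv 0$. You correctly identify that identical vanishing would force $D(s,\chi_0,F)$ to be proportional to $\zeta_q(s-k+1)$, but you then defer to an unexecuted comparison of completed $L$-functions (``the archimedean local types and the resulting functional equations are incompatible''), explicitly flagging it as the main remaining work. As stated this is not a proof: you would need to pin down the exact functional equation of $D^*(s,\chi_0,F)$ from \cite{KKS}, the (non-Eulerian) completion of $\zeta_q$, and argue rigorously that no identity between them is possible --- considerably more machinery than the theorem needs. The paper closes this step with an elementary observation you already have in hand: by Proposition~\ref{Main-Prop}, $D(s,\chi_0,F)$ vanishes at \emph{every} non-positive integer $s=0,-1,-2,\dots$ (forced by the poles of $\Gamma(s)$), whereas the only real zeros of $\zeta_q(s-k+1)$ are at $s=k-1$ and $s=k-1-2t$, all of the same parity; hence the two functions cannot be proportional, since one vanishes at negative integers of both parities and the other does not. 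Substituting this parity argument for your functional-equation sketch makes your proof complete and essentially identical to the paper's.
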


\begin{proof}
Consider the Dirichlet series
\begin{eqnarray*}
\overline{D}(s; a,q, F) :=
\sum_{a=1 \atop (a,q)=1}^{q-1}D(s; a, q, F)  
~-~ \alpha_{F,q}M \zeta(s-k+1),
\end{eqnarray*}
where 
$$
M:= \prod_{p|q \atop p \text{  prime}}(1 - p^{-(s-k+1)})
\phantom{m}  
\text{ and }
\phantom{m}
\alpha_{F, q}:= qc_{F,q}.
$$
Since 
$$
M \zeta(s-k+1) 
= q^{-(s-k+1)} \sum_{a=1 \atop (a,q)=1}^{q-1}\zeta(s-k+1, a/q),
$$
we have
\begin{eqnarray*}
\overline{D}(s; a,q, F)
&=&
\sum_{a=1 \atop (a,q)=1}^{q-1} \left[ D(s; a, q, F)  
~-~ \alpha_{F,q}q^{-(s-k+1)} \zeta(s-k+1, a/q) \right]\\
&=&
\sum_{a=1 \atop (a,q)=1}^{q-1} ~~\underset{\underset{m \equiv a \mod q}
{m \ge 1}}{\sum} \frac{\lan f_m, f_m \ran - \alpha_{F,q}m^{k-1}}{m^s}. 
\end{eqnarray*}
By Proposition~\ref{Main-Prop}, the series $\overline{D}(s; a, q, F)$ has 
a meromorphic continuation to $\C$ with
no poles on the real line and vanishes at $s= k-1 - 2t$, where 
$t \in \N, ~ t > (k-1)/2 $. 

Note that the function $\overline{D}(s; a, q, F)$ can
not be identically zero. If otherwise, 
$$
\sum_{a=1 \atop (a,q)=1}^{q-1} D(s; a, q, F) 
= \alpha_{F,q} M \zeta(s-k+1).
$$
This is a contradiction as zeros of the 
Riemann zeta function on the negative real axis are at 
negative even integers whereas each $D(s;a,q, F)$ 
has zeros at all negative integers. Now using 
Theorem~\ref{Pribitkin-result}, 
we get the desired result.
\end{proof}

As an immediate corollary, we get
\begin{cor}
For $n>1$, let $F$ be a non-zero cusp form in 
$S_k(\Gamma_n)$ with Fourier-Jacobi coefficients
$\{f_m\}_{m \in \N}$. Let $c_{F,2}$ be the 
residue of the series $D(s; 1,2, F)$ at $s=k$. 
Then the following hold:
\begin{itemize}
\item 
there exist infinitely many odd $m \in \N$ 
such that $\lan f_m, f_m \ran > 2 c_{F,2} m^{k-1}$ and 
\item 
there exist infinitely many odd $m \in \N$ 
such that $\lan f_m, f_m \ran < 2 c_{F,2} m^{k-1}$.
\end{itemize}
\end{cor}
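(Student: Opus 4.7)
The plan is to obtain this corollary as an immediate specialization of the preceding theorem to the modulus $q = 2$. First I would verify that $c_{F,2}$ as defined in the corollary (the residue of $D(s;1,2,F)$ at $s=k$) coincides with the quantity $c_{F,q}$ appearing in the theorem when $q=2$; this is precisely the content of Remark~\ref{lem-1}, which guarantees that the residue is independent of the chosen representative $a$, so taking $a=1$ is legitimate and unambiguous.

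Second, I would invoke the preceding theorem with $q = 2$ to produce natural numbers $b, c$ satisfying $(bc, 2) = 1$. Since the only residue class modulo $2$ coprime to $2$ is the class of odd integers, the two congruence conditions $m \equiv b \pmod 2$ and $m \equiv c \pmod 2$ both collapse to the single condition that $m$ be odd. Substituting $q c_{F,q} = 2 c_{F,2}$, the two bullet points of the theorem translate verbatim into the two bullet points of the corollary.

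I do not expect any genuine obstacle: the corollary is an illustrative special case rather than an independent assertion, and the translation is essentially a remark about the triviality of the group $(\Z/2\Z)^\times$. The only items worth noting for completeness are that $c_{F,2}$ is well-defined, real and strictly positive by Remark~\ref{imp}, and that the non-vanishing argument inside the theorem's proof (contrasting the real zeros of $\zeta(s-k+1)$ with those of $D(s;a,q,F)$) goes through uniformly in $q$, so no delicate issue arises at the small modulus $q=2$.
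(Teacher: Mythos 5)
Your proposal is correct and matches the paper exactly: the paper presents this as an ``immediate corollary'' of the preceding theorem, obtained by setting $q=2$ so that the unique coprime residue class forces $b$ and $c$ to be odd and $qc_{F,q}=2c_{F,2}$. Your additional checks (well-definedness of $c_{F,2}$ via Remark~\ref{lem-1} and positivity via Remark~\ref{imp}) are sound and consistent with the paper's setup.
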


\smallskip
\noindent
{\bf Acknowledgments.}
The second author would like to thank the Hausdorff Research 
Institute for Mathematics, where most of this work was 
carried out during the Trimester 
program ``Arithmetic and Geometry'', Jan-Feb 2013.
We would like to thank the referee for several relevent 
suggestions which improved the presentation
of the paper. Further, we would like to thank W. Kohnen 
for making this collaboration possible.

\end{document}